\numberwithin{equation}{section}
\theoremstyle{plain}
\newtheorem{theorem}{Theorem}[section]
\theoremstyle{definition}
\theoremstyle{plain}
\newtheorem{proposition}[theorem]{Proposition}
\theoremstyle{plain}
\DeclareMathOperator{\rank}{rank}
\title{Halfcanonical Gorenstein curves of codimension four}
\author{Patience Ablett}
\date{}
\begin{document}

\maketitle

\begin{abstract}
    Recent work of Schenck, Stillman and Yuan \cite{schenck2020calabiyau} outlines all possible Betti tables for Artin Gorenstein algebras $A$ with regularity($A$) = 4 = codim($A$). We populate the second half of this list with examples of stable curves, and ask if there are further possible constructions. The problem of deformation between curves with the same Hilbert series but different Betti tables is ongoing work, but our work solves one case: a deformation (due to Jan Stevens) between a reducible curve corresponding to Betti table type \hyperref[2.7]{2.7} in \cite{schenck2020calabiyau} and the curve obtained as the intersection of a del Pezzo surface of degree 5 and a cubic hypersurface.
\end{abstract}

\section{Introduction}
Gorenstein rings are a frequently seen subset of Cohen--Macaulay rings, first introduced by Grothendieck in a 1961 seminar.  Since Buchsbaum and Eisenbud's 1977~\cite{10.2307/2373926} structure theorem on Gorenstein rings of codimension 3, much work has been done on the case of Gorenstein rings of codimension 4, including Reid's general structure theorem~\cite{reid2015gorenstein}. 
Recent work on Gorenstein rings involves the study of Gorenstein Calabi--Yau 3-folds, hereon referred to as GoCY 3-folds. Calabi-Yau 3-folds play a vital role in the study of string theory \cite{candelas1985vacuum}, \cite{candelas1991pair}. Following work of Coughlan, Golebiowski, Kapustka and Kapustka~\cite{coughlan2016arithmetically} which presented a list of nonsingular GoCY 3-folds, Schenck, Stillman and Yuan~\cite{schenck2020calabiyau} outlined all possible Betti tables for Artin Gorenstein algebras with Castelnuovo--Mumford regularity and codimension 4. More recently, Kapustka, Kapustka, Ranestad, Schenck, Stillman and Yuan \cite{kapustka2021quaternary} exhibit liftings to GoCY 3-folds corresponding to types of nondegenerate quartic. Other recent work on GoCY 3-folds can be seen in \cite{brown2017polarized}, \cite{brown2019gorenstein}. The possible Betti tables in \cite{schenck2020calabiyau} are split into two sections: eight Betti tables corresponding to the 11 GoCY 3-folds outlined in~\cite{coughlan2016arithmetically}, and eight which cannot correspond to a nonsingular GoCY 3-fold. \\

Our results follow on from~\cite{schenck2020calabiyau}, and focus on those Betti tables which cannot correspond to nonsingular GoCY 3-folds. Our initial goal is to populate the list with concrete examples of stable curves in $\mathbb{P}^5$ corresponding to the given Betti tables. From the restrictions on Castelnuovo-Mumford regularity such curves are halfcanonical, meaning $\omega_C=\mathcal{O}_C(2A)$, where $A$ is the hyperplane class. Our results are summarised in table \ref{tab:table11}. MAGMA code for all eight types can be found at \newline $\hspace*{5mm}$ \url{https://sites.google.com/view/patience-ablett/msc-project}. \newline Note that type 2.4 is described in \cite{schenck2020calabiyau}. We then seek to answer the question of whether these curves are the only possible constructions, and whether we can construct flat deformations between curves in the same Hilbert scheme. Partial results have been achieved here. For type \hyperref[2.7]{2.7} we outline a flat deformation to a curve given by a del Pezzo surface of degree five intersecting a cubic hypersurface. \\

Our method to construct curves relies on first identifying the possible quadric generators in $I_C$. For types \hyperref[2.1]{2.1} and \hyperref[2.2]{2.2} these quadrics are necessarily of the stated form. For types \hyperref[2.5]{2.5} and \hyperref[2.6]{2.6} we can show that we have identified all possible quadrics in the case that they define a Koszul algebra, but a question remains of whether there is a possible set of non-Koszul quadratic generators. Our construction techniques use ideas from liaison theory, which began with work of Peskine and Szpir\'o \cite{peskine1974liaison}. In particular the following result is used: 
\begin{theorem}[\cite{migliore2002liaison} page 77]\label{liaison}
    Let $X_1$, $X_2 \subset \mathbb{P}^n$ be projectively Cohen--Macaulay subschemes of codimension $r$. Then if $X=X_1 \cup X_2$ is Gorenstein it follows that $X_1 \cap X_2$ is also Gorenstein, and of codimension $r+1$.
\end{theorem}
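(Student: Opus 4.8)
The plan is to read everything off a single Mayer--Vietoris sequence for the homogeneous coordinate rings over $R=k[x_0,\dots,x_n]$. Write $Z:=X_1\cap X_2$, so that $I_Z=I_{X_1}+I_{X_2}$ while $I_X=I_{X_1}\cap I_{X_2}$, and record the exact sequence
\begin{equation}
0\longrightarrow R/I_X\xrightarrow{\ \delta\ } R/I_{X_1}\oplus R/I_{X_2}\xrightarrow{\ \sigma\ } R/I_Z\longrightarrow 0,\label{mv}
\end{equation}
where $\delta$ is the diagonal of the two quotient maps and $\sigma$ is their difference; exactness is an elementary ideal computation. Throughout I will use that $X_1$ and $X_2$ share no common irreducible component -- this is implicit in reading $X_1\cup X_2$ as a genuine union (take $X_1=X_2$ a complete intersection to see the codimension claim fails otherwise) and holds in every application we make.

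First I would determine $\operatorname{codim}Z$. Since $X$ is Gorenstein it is arithmetically Cohen--Macaulay, as are $X_1$ and $X_2$; writing $d+1=n-r+1$ for the common Krull dimension of $R/I_X,R/I_{X_1},R/I_{X_2}$, their local cohomology modules $H^i_{\mathfrak m}(-)$ all vanish for $i\neq d+1$. Feeding \eqref{mv} into the long exact sequence of local cohomology then forces $H^i_{\mathfrak m}(R/I_Z)=0$ for $i<d$, i.e. $\operatorname{depth}(R/I_Z)\ge d$, hence $\dim R/I_Z\ge d$ and $\operatorname{codim}Z\le r+1$. Conversely, every component of $Z$ lies in some component $V$ of $X_1$; as $X_1$ is Cohen--Macaulay it is equidimensional, so $\dim V=n-r$, and the no-common-component hypothesis makes the containment strict, whence $\operatorname{codim}Z\ge r+1$. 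Thus $\operatorname{codim}Z=r+1$, and combined with the depth bound this shows $R/I_Z$ is Cohen--Macaulay.

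Next I would dualise. Applying $\operatorname{Hom}_R(-,\omega_R)$ to \eqref{mv} and writing $\omega_{(-)}$ for canonical modules, the fact that $R/I_X,R/I_{X_1},R/I_{X_2}$ are arithmetically Cohen--Macaulay of codimension $r$ and $R/I_Z$ is arithmetically Cohen--Macaulay of codimension $r+1$ makes $\operatorname{Ext}^i_R(-,\omega_R)$ vanish outside $i=r$ for the first three and outside $i=r+1$ for the last, collapsing the long exact sequence to
\begin{equation}
0\longrightarrow\omega_{R/I_{X_1}}\oplus\omega_{R/I_{X_2}}\longrightarrow\omega_{R/I_X}\longrightarrow\omega_{R/I_Z}\longrightarrow 0.\label{dualmv}
\end{equation}
Now use that $X$ is \emph{Gorenstein}: $\omega_{R/I_X}\cong (R/I_X)(a)$ for the $a$-invariant $a$ of $R/I_X$. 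The heart of the matter is to identify the image of the left-hand term of \eqref{dualmv} inside $(R/I_X)(a)$. Over $S:=R/I_X$ there is a natural isomorphism $\omega_{R/I_{X_j}}\cong\operatorname{Hom}_S(R/I_{X_j},\omega_S)$ under which the map to $\omega_S\cong S(a)$ becomes the inclusion of $\bigl(0:_S I_{X_j}/I_X\bigr)(a)=\bigl((I_{X_k}:I_{X_j})/I_X\bigr)(a)$, where $\{j,k\}=\{1,2\}$; and since $X_j,X_k$ are unmixed with no common component, $(I_{X_k}:I_{X_j})=I_{X_k}$, so this image is $(I_{X_k}/I_X)(a)$. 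Summing over $j=1,2$, the image of the whole left-hand term is $\bigl((I_{X_1}+I_{X_2})/I_X\bigr)(a)=(I_Z/I_X)(a)$, and therefore
\[
\omega_{R/I_Z}\;\cong\;\frac{(R/I_X)(a)}{(I_Z/I_X)(a)}\;=\;(R/I_Z)(a).
\]
As $R/I_Z$ is Cohen--Macaulay with cyclic canonical module, $Z$ is arithmetically Gorenstein of codimension $r+1$.

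The two long exact sequences are routine; the work concentrates in the dualisation step. One must carefully justify the precise vanishing pattern of the $\operatorname{Ext}^i_R(-,\omega_R)$ (where the arithmetic Cohen--Macaulayness hypotheses and the Cohen--Macaulayness of $Z$ just proved enter) and, more delicately, the identification of $\omega_{R/I_{X_j}}$ with the colon ideal $(I_{X_k}:I_{X_j})/I_X$ together with the equality $(I_{X_k}:I_{X_j})=I_{X_k}$. This last equality -- the statement that $X_1$ and $X_2$ are \emph{geometrically} linked by $X$ -- is the only place the no-common-component hypothesis is genuinely needed, and I expect it to be the main point to pin down rigorously.
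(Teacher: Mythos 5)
The paper offers no proof of this statement---it is quoted from Migliore's book \cite{migliore2002liaison}---so there is no internal argument to compare against; judged on its own, your proof is correct and is essentially the standard liaison-theoretic one found in that reference: the sequence $0 \to R/I_X \to R/I_{X_1}\oplus R/I_{X_2} \to R/(I_{X_1}+I_{X_2}) \to 0$, local cohomology to get Cohen--Macaulayness and codimension $r+1$ of the intersection, then duality into $\omega_R$ and the identification of the image of $\omega_{R/I_{X_j}}$ in $\omega_{R/I_X}\cong (R/I_X)(a)$ with $\bigl((I_X:I_{X_j})/I_X\bigr)(a)=\bigl(I_{X_k}/I_X\bigr)(a)$. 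Two points are worth making explicit when you write this up. First, you are right that the no-common-component hypothesis is genuinely needed and is left implicit in the statement as the paper records it; it is exactly what gives $(I_{X_k}:I_{X_j})=I_{X_k}$ (using that $I_{X_k}$ is unmixed because $X_k$ is projectively Cohen--Macaulay), i.e.\ that the linkage is geometric, and it also underlies your codimension lower bound. Second, at the outset you set $I_Z=I_{X_1}+I_{X_2}$, but a priori the saturated ideal of $X_1\cap X_2$ is only the saturation of this sum; your own depth bound $\operatorname{depth} R/(I_{X_1}+I_{X_2})\geq n-r$, which is positive whenever the $X_i$ have positive dimension (as in every application in the paper), is what shows the sum is already saturated and that the intersection is nonempty, so the argument should be phrased for the ideal sum with this observation recorded. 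With those clarifications the proof is complete and matches the cited source in both structure and substance.
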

In the situation of the above theorem, we say that $X_1$ and $X_2$ are geo\-metrically G-linked by $X$, and that $X_1$ is residual to $X_2$ in $X$. Suppose more generally that $X_1 \cup X_2 \subset X$. Then if $(I_X:I_{X_1})=I_{X_2}$ and $(I_X:I_{X_2})=I_{X_1}$ we say that $X_1$ and $X_2$ are algebraically G-linked by $X$, and again $X_1$ is residual to $X_2$ in $X$~\cite[pages 62--64]{migliore2002liaison}. \\

Our constructions also rely on the Tom and Jerry formats as seen in \cite{brown2012fano}, \cite{brown2018tutorial} and \cite{papadakis2001gorenstein}. In this paper, for a given ideal $I$ we use $\text{Tom}_i$ to refer to a skew-symmetric matrix with $a_{kl} \in I$ for $k,l \neq i$ and other elements general. Similarly, we define $\text{Tom}_{ij}$ to have $a_{kl} \in I$ for $k,l \notin \{i,j\}$ and other elements general. On the other hand, $\text{Jer}_{ij}$ refers to a skew matrix with $a_{kl} \in I$ for $k$ or $l \in \{i,j\}$ and other elements general. \\

For simplicity we focus mostly on simpler cases of stable curves where a curve $C$ is given by $C_1 \cup C_2$ with $C_1$, $C_2$ nonsingular and irreducible, meeting transversally in $d$ points. Note that the inclusion $i\colon C_1 \rightarrow C$ is a finite morphism. We may therefore use the following proposition:

\begin{proposition}[\cite{MR0463157} Ch. III, Ex. 7.2]
    Let $\pi\colon Y \rightarrow X$ be a finite morphism of projective schemes, with $\dim Y=\dim X$. Then $\omega_Y=\textup{Hom}_{\mathcal{O}_X}(\pi_*\mathcal{O}_Y,\omega_X)$.
\end{proposition}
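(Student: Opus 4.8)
Here $\mathrm{Hom}_{\mathcal{O}_X}(\pi_*\mathcal{O}_Y,\omega_X)$ denotes the sheaf $\mathcal{H}om$, and $\omega_X,\omega_Y$ are the dualizing sheaves. The plan is to recognise $\pi^{!}\omega_X:=\mathcal{H}om_{\mathcal{O}_X}(\pi_*\mathcal{O}_Y,\omega_X)$ as a dualizing sheaf for $Y$ and then invoke the uniqueness of the dualizing sheaf. First we make sense of the right-hand side as a sheaf on $Y$: since $\pi$ is finite, $\pi_*\mathcal{O}_Y$ is a coherent sheaf of $\mathcal{O}_X$-algebras, so $\pi^{!}\omega_X$ carries the module structure $(a\cdot\varphi)(b)=\varphi(ab)$ over $\pi_*\mathcal{O}_Y$ and therefore corresponds to a quasi-coherent $\mathcal{O}_Y$-module. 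On an affine open $\mathrm{Spec}\,A\subseteq X$ with preimage $\mathrm{Spec}\,B$ (so $B$ is module-finite over $A$), this $\mathcal{O}_Y$-module is $\mathrm{Hom}_A(B,N)$, where $\widetilde{N}=\omega_X|_{\mathrm{Spec}\,A}$; this is a finite $B$-module, so $\pi^{!}\omega_X$ is coherent.

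The crucial ingredient is the duality isomorphism for the finite morphism $\pi$: for every coherent $\mathcal{O}_Y$-module $\mathcal{G}$ there is an isomorphism
\[
  \pi_*\,\mathcal{H}om_{\mathcal{O}_Y}\!\big(\mathcal{G},\pi^{!}\omega_X\big)\ \cong\ \mathcal{H}om_{\mathcal{O}_X}\!\big(\pi_*\mathcal{G},\omega_X\big),
\]
natural in $\mathcal{G}$. Locally this is the adjunction $\mathrm{Hom}_B\!\big(M,\mathrm{Hom}_A(B,N)\big)\cong\mathrm{Hom}_A(M,N)$ expressing $\mathrm{Hom}_A(B,-)$ as right adjoint to restriction of scalars along $A\to B$; one checks the local isomorphisms are compatible with localisation and so glue. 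Taking global sections, and using $\Gamma\big(X,\pi_*(-)\big)=\Gamma\big(Y,-\big)$, yields
\[
  \mathrm{Hom}_{\mathcal{O}_Y}\!\big(\mathcal{G},\pi^{!}\omega_X\big)\ \cong\ \mathrm{Hom}_{\mathcal{O}_X}\!\big(\pi_*\mathcal{G},\omega_X\big).
\]

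Next we chain this with Serre duality on $X$. Put $n=\dim X$. The defining property of the dualizing sheaf of the projective $k$-scheme $X$ \cite{MR0463157} gives a natural isomorphism $\mathrm{Hom}_{\mathcal{O}_X}(\pi_*\mathcal{G},\omega_X)\cong H^n(X,\pi_*\mathcal{G})^{\vee}$. Since a finite morphism is affine, $R^i\pi_*=0$ for $i>0$, hence $H^i(X,\pi_*\mathcal{G})\cong H^i(Y,\mathcal{G})$ for all $i$; invoking the hypothesis $\dim Y=\dim X=n$ this gives $H^n(X,\pi_*\mathcal{G})^{\vee}\cong H^n(Y,\mathcal{G})^{\vee}$. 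Composing the last three displays produces a natural isomorphism of contravariant functors on coherent sheaves on $Y$,
\[
  \mathrm{Hom}_{\mathcal{O}_Y}\!\big(-,\pi^{!}\omega_X\big)\ \cong\ H^n(Y,-)^{\vee}.
\]
Since $Y$ is projective, it has a dualizing sheaf $\omega_Y$, characterised by the same property $\mathrm{Hom}_{\mathcal{O}_Y}(-,\omega_Y)\cong H^n(Y,-)^{\vee}$. Thus $\mathrm{Hom}_{\mathcal{O}_Y}(-,\pi^{!}\omega_X)$ and $\mathrm{Hom}_{\mathcal{O}_Y}(-,\omega_Y)$ are isomorphic functors, and Yoneda's lemma supplies a canonical isomorphism $\pi^{!}\omega_X\cong\omega_Y$, which is the asserted identity.

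The step I expect to be the main obstacle is the middle one: arranging the finite-morphism duality isomorphism while keeping the $\mathcal{O}_Y$- and $\pi_*\mathcal{O}_Y$-module structures straight, and verifying naturality and gluing. The algebra involved is elementary, but this is where the bookkeeping must be handled with care. If one additionally wants $\pi^{!}\omega_X\cong\omega_Y$ to be compatible with trace maps, one defines $t_Y$ as $H^n(Y,\pi^{!}\omega_X)=H^n(X,\pi_*\pi^{!}\omega_X)\to H^n(X,\omega_X)\xrightarrow{t_X}k$ via evaluation at $1$, and checks the resulting diagram commutes.
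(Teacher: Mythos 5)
Your proposal is correct: the paper gives no proof of this statement, citing it directly as Hartshorne, Ch.~III, Ex.~7.2, and your argument is precisely the standard solution of that exercise (finite-morphism duality $\pi_*\mathcal{H}om_{\mathcal{O}_Y}(\mathcal{G},\pi^{!}\omega_X)\cong\mathcal{H}om_{\mathcal{O}_X}(\pi_*\mathcal{G},\omega_X)$ via the restriction-of-scalars adjunction, vanishing of higher direct images for the affine morphism $\pi$, Serre duality on $X$, and uniqueness of the dualizing sheaf). Your use of the hypothesis $\dim Y=\dim X$ to match the top cohomological degree is exactly where it is needed, so nothing is missing.
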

It follows that in the case of our inclusion, we have $\omega_{C_1}=\text{Hom}_{\mathcal{O}_C}(\mathcal{O}_{C_1},\omega_C)$. Moreover, since $C_1$ is assumed to be nonsingular, it is normal and $\omega_{C_1}=\mathcal{O}_{C_1}(K_{C_1})$. Since $\omega_C=\mathcal{O}_C(2A)$, we are considering $\mathcal{O}_C$-module homomorphisms from $\mathcal{O}_{C_1}$ to $\mathcal{O}_C(2A)$. Such a homomorphism is defined by where $1$ is sent. Indeed, $1$ can mapped to any element of $\mathcal{O}_C(2A)$ annihilated by $I_{C_2}$. Therefore  $\omega_{C_1}=\mathcal{O}_{C_1}(2A_1-D)$, where $A_1$ is the hyperplane class on $C_1$ and $D$ is the locus of double points.  \\

This paper is part of a work in progress with Miles Reid, Jan Stevens and Stephen Coughlan. In particular we hope to publish further work on the question of deformations. Table \ref{tab:table12} outlines which curves lie in the same Hilbert scheme and which we would therefore hope to construct deformations between.

\renewcommand{\arraystretch}{1.3}
\begin{table}
\begin{center}
\begin{tabular}[c]{|l|l|l|}
\hline
 \multicolumn{3}{| c |}{Classifying curves by genus and degree}\\
 \hline
 \rule{0pt}{35pt}\pbox{2.5cm}{Degree\\} & \pbox{2.5cm}{Genus\\} & \pbox{2.5cm}{Corresponding \\ Betti table\\}\\
 \hline
 14 & 15 & CGKK 1 \\
 15 & 16 & CGKK 2, SSY 2.7, SSY 2.8 \\
 16 & 17 & CGKK 3, SSY 2.3, SSY 2.4, SSY 2.6 \\
 17 & 18 & CGKK 4, CGKK 5, CGKK 6, SSY 2.2, SSY 2.5 \\
 18 & 19 & CGKK 7, CGKK 8, SSY 2.1 \\
 19 & 20 & CGKK 9, CGKK 10 \\
 20 & 21 & CGKK 11 \\
 \hline
\end{tabular}
\end{center}
\caption{A summary of which Hilbert scheme every curve lies in.}
\label{tab:table12}
\end{table}
\begin{center}
\end{center}

\section{Examples of stable curves}\label{results}
In this section we present a series of stable curves with free resolutions corresponding to the Betti tables of type 2 in~\cite{schenck2020calabiyau}. We begin by outlining two possible constructions for type \hyperref[2.6]{2.6}, which use techniques from liaison theory and Brown and Reid's Tom and Jerry format. We then outline \hyperref[2.7]{2.7} and describe the flat deformation from type \hyperref[2.7]{2.7} to CGKK 2 \cite{coughlan2016arithmetically}, which lies in the same Hilbert scheme. We finally present type \hyperref[2.3]{2.3}, since this is a somewhat different case which uses rational scrolls. Other constructions are similar to types \hyperref[2.6]{2.6} and \hyperref[2.7]{2.7} and we therefore relegate them to an appendix.

\renewcommand{\arraystretch}{1.3}
\begin{table}
\begin{tabular}[c]{|c||c|c|c|c|}
\hline
 \multicolumn{5}{| c |}{Nodal curve models for each type}\\
 \hline
 \rule{0pt}{35pt}\pbox{2.5cm}{Betti table\\} & \pbox{2.5cm}{Irreducible \\ components\\} & \pbox{2.5cm}{Degrees of \\ components\\} & \pbox{2.5cm}{Genera of\\ components\\} & \pbox{2.5cm}{Number of \\double points\\}\\
 \hline
 Type 2.1 & $C_1 \cup C_2$ & 12, 6 & 10, 4 & 6 \\
 Type 2.2 & $C_1 \cup C_2$ & 11, 6 & 9, 4 & 6\\
 Type 2.3 & $C_1 \cup C_2$ & 9, 7& 7, 5& 6\\
 Type 2.5 & $C_1 \cup C_2$ & 13, 4 & 12, 3 & 4\\
 Type 2.6 & $C_1 \cup C_2$  & 12, 4 or 8, 8 & 11, 3 or 7, 7 & 4  \\
 Type 2.7 & $C_1 \cup C_2$  & 11, 4 & 10, 3 & 4 \\
 Type 2.8 & $C_1 \cup C_2 \cup C_3$ & 7, 4, 4 & 4, 3, 3 & 8\\
 \hline
\end{tabular}
\caption{A summary of our constructions.}
\label{tab:table11}
\end{table}

\subsection{Type 2.6}\label{2.6}
We first construct a curve in $\mathbb{P}^5_{\left<x_0\dots x_5\right>}$ corresponding to Schenck, Stillman and Yuan's type 2.6. 
\begin{table}[h!]
    \[\begin{array}{c|l}
        & 0 \hspace{0.65cm} 1\hspace{0.65cm} 2 \hspace{0.68cm} 3 \hspace{0.65cm} 4 \\ \hline
       0 &1 \hspace{0.5cm} - \hspace{0.45cm} - \hspace{0.45cm} - \hspace{0.45cm} - \\
      1& - \hspace{0.55cm} 4 \hspace{0.65cm} 4 \hspace{0.65cm} 1 \hspace{0.6cm}-\\
      2&- \hspace{0.55cm} 4 \hspace{0.65cm} 8 \hspace{0.65cm} 4 \hspace{0.6cm} -\\
      3&- \hspace{0.55cm} 1 \hspace{0.65cm} 4 \hspace{0.65cm} 4 \hspace{0.6cm} - \\
      4& - \hspace{0.4cm} - \hspace{0.44cm} - \hspace{0.42cm} - \hspace{0.53cm} 1
    \end{array}
\]
\caption*{Type 2.6~\cite{schenck2020calabiyau}}
\label{tab:table5}
\end{table}
The curve $C$ has degree 16, and due to the assumptions on Castelnuovo--Mumford regularity it is halfcanonical with arithmetic genus 17. Let $J=(Q_1,Q_2,Q_3,Q_4)$ be the ideal of the quadric relations, $S=k[x_0,\dots,x_n]$. Then $R=S/J$ has a minimal free resolution with linear part corresponding to the first line of the Betti table. We can use this to rule out possible ideals $J$ where there are too many or too few linear syzygies. Note that we may also have syzygies of higher order, but focusing on the linear syzygies is often enough to find appropriate quadric relations. For type 2.5 and 2.6 we obtain possible quadric relations through an analysis of the case where $R$ is a Koszul algebra, detailed at the end of this section. This raises the question of whether there exist appropriate quadric relations which do not define a Koszul algebra.

It can be shown that the quadrics $\{x_0x_5,x_1x_5,x_2x_5,Q_4\}$, where $Q_4$ is in the ideal $(x_0,x_1,x_2)\backslash(x_5)$, have four linear first syzygies and one linear second syzygy. In this case our curve $C$ breaks into two pieces: $C_1 \subset \mathbb{P}^4_{\left<x_0\dots x_4\right>}$ and $C_2 \subset \mathbb{P}^2_{\left<x_3:x_4:x_5\right>}$, meeting transversally in $d$ points. For simplicity we assume these curves are nonsingular and irreducible.
It follows that $C_2$ is a plane curve defined by an irreducible cubic or quartic. \\ Recall that 
\begin{equation}
   \mathcal{O}_{C_1}(K_{C_1}) = \omega_{C_1}=\mathcal{O}_{C_1}(2A_1-D).
\end{equation}
It follows that $K_{C_1}=2A_1-D$, where $A_1$ is the hyperplane class in $C_1$ and $D$ is the locus of double points of $C_1 \cup C_2$. Hence, deg $K_{C_1} = 2g_1-2=2d_1-d$ and consequently 
\begin{equation}\label{points1}
    d_1=g_1-1+\tfrac{d}{2}.
\end{equation}

Similarly for $C_2$ we have
\begin{equation}\label{points2}
    d_2=g_2-1+\tfrac{d}{2}.
\end{equation}

If $C_2$ was a nonsingular cubic then it would intersect the hypersurface given by $x_5=0$ in a maximum of 3 points. However, from (\ref{points2}) we obtain $d=6$, a contradiction. Consequently $C_2$ is defined by a nonsingular quartic of degree 4 and genus 3. It follows that the double locus of $C$ contains 4 points, and we expect $C_1$ to be a curve of degree 12 and genus 11. \\

Let $\Gamma$ be the curve $C_1 \cup l_1$ in $\mathbb{P}^4$, with $l_1$ defined by $x_0=x_1=x_2=0$. Then from our earlier discussion of nodal curves we have $K_{\Gamma}|_{C_1}=K_{C_1}+D$ where $D$ is the divisor of the double locus of 4 points on $C_1$. Considering $D$ as the divisor of the 4 points on $l_1$, we also have $K_{\Gamma}|_{l_1}=K_{l_1}+D=-2H+D$ where $H$ is the hyperplane class. It follows that $\mathcal{O}_{l_1}(K_{\Gamma})=\mathcal{O}_{l_1}(-2+4)=\mathcal{O}_{l_1}(2)$, so $\Gamma$ is halfcanonical, hence Gorenstein. Thus $\Gamma$ is defined by Pfaffians~\cite{10.2307/2373926} and has degree 13. According to the Betti table we need one more quadric relation and 4 cubic relations so it follows that $\Gamma$ should be defined by the $4 \times 4$ Pfaffians of a $5 \times 5$ skew-symmetric matrix. We now describe some constraints on the matrix to ensure $l_1 \subset \Gamma$, and so that it defines four cubic Pfaffians and one quadric. \\

Consider the matrix 
\begin{equation}
    N =  \begin{pmatrix}
    & a_{12} & a_{13} & a_{14} & a_{15} \\
    & & a_{23} & a_{24} & a_{25}  \\
    & & & a_{34} & a_{35}  \\
    & & & & a_{45}  \\
    \end{pmatrix}.
\end{equation}
Further, let the degrees of the $a_{ij}$ be given by
\begin{equation}
    N =  \begin{pmatrix}
    & 1 & 1 & 1 & 2 \\
    & & 1 & 1 & 2  \\
    & & & 1 & 2 \\
    & & & & 2  \\
    \end{pmatrix}.
\end{equation}
Then the $4 \times 4$ Pfaffians are of degree $(2,3,3,3,3)$. Let $I$ be the ideal of the $4 \times 4$ Pfaffians of $N$. Then
\begin{equation}
\begin{split}
    I = (& a_{12}a_{34}-a_{13}a_{24}+a_{14}a_{23}, \\
    & a_{12}a_{35}-a_{13}a_{25}+a_{15}a_{23}, \\ & a_{12}a_{45}-a_{14}a_{25}+a_{15}a_{24}, \\ & a_{13}a_{45}-a_{14}a_{35}+a_{15}a_{34}, \\ & a_{23}a_{45}-a_{24}a_{35}+a_{25}a_{34}).
    \end{split}
\end{equation}
It follows that for any $\{k,l\} \subset \{1,\dots,5\}$, $k\neq l$, setting $a_{ij} \in J=(x_0,x_1,x_2)$ if $i \in \{k,l\}$ or $j \in \{k,l\}$ ensures $I \subset J$. The remaining elements may be general in the coordinates of $\mathbb{P}^4$. In other words, $N$ is a variant of $\text{Jer}_{kl}$. Similarly, if we set $a_{ij} \in J$ for $i,j \neq k$, we ensure $I \subset J$, in which case $N$ is a variant of $\text{Tom}_k$.\\

Defining $\Gamma$ in this way ensures it breaks into two irreducible nonsingular components, our line $l_1$ and curve $C_1$. Moreover, $l_1$ and $C_1$ intersect in 4 points which define a quartic, $q_4$. Mapping $q_4$ into $\mathbb{P}^2$ by adding arbitrary terms in $(x_5)$ defines a nonsingular quartic curve $C_2$. $C=C_1 \cup C_2 \subset \mathbb{P}^5$ is a codimension 4 Gorenstein curve corresponding to Betti table 2.6. A computer algebra package such as MAGMA can be used to verify that each curve is nonsingular and that $C_1$ and $C_2$ intersect transversally. We can also use MAGMA to compute the free resolution as a sanity check. \\

Now instead suppose that the four quadrics are given by $(x_0,x_1) \cap (x_2,x_3)$, which again have the correct minimal free resolution. It follows that $C$ breaks up into $C_1 \subset \mathbb{P}^3_{\left<x_2\dots x_5\right>}$ and $C_2 \subset \mathbb{P}^3_{\left<x_0:x_1:x_4:x_5\right>}$. We may define $C_1$ and $C_2$ in the following way. Consider the complete intersection $X_1=V(F_1,F_2) \subset \mathbb{P}^3_{\left<x_2\dots x_5\right>}$ given by cubics $F_1$, $F_2$ containing the line $l_1 \colon x_2=x_3=0$ in $\mathbb{P}^3$. Such cubics have the form 
\begin{equation}
    F_1=x_2P_3+x_3P_4, \quad F_2=x_2Q_3 + x_3Q_4,
\end{equation}
with $P_3,Q_3,P_4,Q_4$ quadratic forms in $k[x_2,\dots,x_5]$. 
Then $X_1$ breaks into two irreducible components, namely the line $l_1$ and the curve $C_1$, defined by $(F_1,F_2,P_3Q_4-P_4Q_3)$. The curve $C_1$ is nonsingular with degree 8 and genus 7. Similarly we are able to define another (3,3) complete intersection $X_2=V(F_3,F_4) \subset \mathbb{P}^3_{\left<x_0:x_1:x_4:x_5\right>}$ containing the line $l_2 \colon x_0=x_1=0$ in $\mathbb{P}^3$:
\begin{equation}
    F_3=x_0P_1+x_1P_2, \quad F_4=x_0Q_1+x_1Q_2.
\end{equation}
Here $P_1,P_2,Q_1,Q_2$ are quadratic forms in $k[x_0,x_1,x_4,x_5]$. Again $X_2$ breaks into two irreducible components with $C_2$ defined by $(F_3,F_4,P_1Q_2-P_2Q_1)$, and $C_2$ is nonsingular with degree 8 and genus 7. It follows from (\ref{points1}), (\ref{points2}) that $C_1$ and $C_2$ meet in four points, which lie on the line $\mathbb{P}^1_{\left<x_4:x_5\right>}$. We outline constraints on the $P_i$, $Q_j$ so that this occurs. If 
\begin{equation}
\begin{split}
P_3|_{x_2=x_3=0}&=P_1|_{x_0=x_1=0}, \\  P_4|_{x_2=x_3=0}&=P_2|_{x_0=x_1=0}, \\  Q_3|_{x_2=x_3=0}&=Q_1|_{x_0=x_1=0}, \\  Q_4|_{x_2=x_3=0}&=Q_2|_{x_0=x_1=0}, 
\end{split}
\end{equation}
then 
\begin{equation}
    R(x_4,x_5)=(P_3Q_4-P_4Q_3)|_{x_2=x_3=0}=(P_1Q_2-P_2Q_1)|_{x_0=x_1=0}.
\end{equation} In this situation, $C_1$ and $C_2$ meet in exactly 4 points defined by the quartic $R$ in $\mathbb{P}^1_{\left<x_4:x_5\right>}$. Their union is a Gorenstein codimension 4 curve with Betti table 2.6. \\

The candidates for the quadric generators arise from work of Mantero--Mastroeni \cite{mantero2021betti}. Assuming $R=S/J$ is Koszul, we analyse $J=(Q_1,Q_2,Q_3,Q_4)$ in the context of different heights. If $\text{ht} J=4$ then $J$ is a complete intersection of four quadrics and does not have linear syzygies. If $\text{ht} J=1$ then it is given as $zI$ where $z$ is a linear form and $I$ is a complete intersection of linear forms~\cite{mantero2021betti}. Thus for such $J$, $R$ would not correspond to the type 2.6 Betti table, since there would be too many linear syzygies. Moreover, for $\text{ht} J=3$, $R$ is a Koszul almost complete intersection and thus has at most two linear syzygies~\cite{mastroeni2018koszul}. Hence, $J$ must have height 2. Mantero--Mastroeni show that for a Koszul algebra of four quadrics with $\text{ht}J=2$ to have the required Betti table, it must have multiplicity $e(R)=2$.
\begin{theorem}[Mantero--Mastroeni~\cite{mantero2021betti}]
    Let $R$ be Koszul with $\textup{ht}J=2=e(R)$. Then $J$ has one of the following possible forms: \\
    
    \textup{(I)}  $(x_0,x_1) \cap (x_2,x_3)$ or $(x_0^2,x_0x_1,x_1^2,x_0x_2+x_1x_3)$\\
    
    \textup{(II)} $(a_1x_0,a_2x_0,a_3x_0,q)$ where the $a_i$ are independent linear forms and $q \in (a_1,a_2,a_3)\backslash(x_0)$ \\
    
    \textup{(III)} $(a_1x_0,a_2x_0,a_3x_0,q)$ where the $a_i$ are independent linear forms and $q$ is a non-zero divisor modulo $(a_1x_0,a_2x_0,a_3x_0)$. \\
    
\end{theorem}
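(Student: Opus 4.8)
The key point is that the pair $(\operatorname{ht} J, e(R)) = (2,2)$ is as small as possible for a non--complete-intersection, so the scheme structure of $R$ is extremely constrained; the plan is to enumerate the possible primary decompositions and then normalize the generators in each. First I would apply the associativity formula for multiplicity, $e(R) = \sum_{\mathfrak p}\operatorname{length}(R_{\mathfrak p})\deg(S/\mathfrak p)$, the sum being over the minimal primes $\mathfrak p$ of $J$ with $\operatorname{ht}\mathfrak p = 2$. Since each summand is a positive integer and the total is $2$, exactly one of the following occurs: (a) two codimension-$2$ minimal primes, each of degree and length $1$, so $R$ is generically a union of two linear subspaces $\mathbb P^{n-2}$; (b) one codimension-$2$ minimal prime of degree $2$ and length $1$, hence a reduced irreducible variety of degree equal to its codimension, which must therefore be degenerate, i.e. $V(\ell,Q)$; or (c) one codimension-$2$ minimal prime of degree $1$ and length $2$, a non-reduced ``ribbon'' supported on a $\mathbb P^{n-2}$. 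In parallel I would record which lower-dimensional or embedded components $J$ may carry. A useful preliminary observation is that $R$ is never Cohen--Macaulay here: if it were, a general Artinian reduction would have length $e(R)=2$ while being defined by quadrics in two variables, which forces length $\geq 3$; so any candidate decomposition that would make $R$ Cohen--Macaulay is immediately discarded.

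The next step is to reconstruct the generators case by case. In case (a) a linear change of coordinates puts the two linear spans into standard position: if the planes are linearly general, their union has saturated ideal $(x_0,x_1)\cap(x_2,x_3)$, already four quadrics, and minimality together with quadratic generation (the degree-$2$ part of that saturated ideal being $4$-dimensional) forces $J$ to equal it; if instead the planes meet in codimension $3$, the radical needs only three quadrics, so the fourth generator must encode a lower-dimensional or non-reduced piece, and tracking this while keeping $J$ quadric-generated and $e(R)=2$ yields the shape $(a_1x_0,a_2x_0,a_3x_0,q)$, with $x_0$ cutting out the common hyperplane and $a_1,a_2,a_3$ the codimension-$3$ intersection --- and $q\notin(x_0)$ being automatic, since otherwise $\operatorname{ht} J = 1$. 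Whether $q$ is a nonzerodivisor modulo $(a_1x_0,a_2x_0,a_3x_0)$ or lies in $(a_1,a_2,a_3)$ separates forms (III) and (II). For (c) I would use the structure theory of doubled codimension-$2$ subschemes together with Hilbert--Burch on the Cohen--Macaulay locus to show the ribbon is, up to coordinates, $(x_0^2,x_0x_1,x_1^2,x_0x_2+x_1x_3)$, which is grouped with (a) into form (I) because it is a flat degeneration with the same Hilbert function; case (b) should again collapse into the $(a_ix_0,q)$ format. (One can also attempt this through liaison, linking $J$ by a complete intersection of two of its quadrics to a residual ideal of the same height and multiplicity and bootstrapping, though the Koszul property does not transfer cleanly under linkage.)

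Finally I would impose the Koszul hypothesis to eliminate the remaining spurious configurations, using three tools: (i) quadratic generation kills any ``wider'' doubling or embedded component that would demand a cubic minimal generator; (ii) Koszulness descends to $R/(\ell)$ for a general linear nonzerodivisor $\ell$, so after peeling off a maximal linear regular sequence one may argue on a depth-zero algebra, and combined with the non-Cohen--Macaulayness above this rules out several primary decompositions; (iii) a Koszul algebra has $1/H_R(-t)$ a power series with nonnegative coefficients, which I would check against the Hilbert series computed from each surviving geometry, discarding the failures. That the three listed forms genuinely occur is then verified directly: each is, up to coordinates, a monomial ideal --- hence $G$-quadratic, hence Koszul --- or carries an explicit Koszul filtration. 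The hardest part is step (iii): because Koszulness is a global homological condition rather than one detectable at a point, the multiplicity and geometry bookkeeping alone cannot exclude the non-listed non-reduced structures. The most efficient route is to invoke the classification of Betti numbers of Koszul algebras defined by four quadrics --- together with the analysis of Koszul almost complete intersections already cited --- to pin down the linear strand of the minimal free resolution, after which what remains is a finite, if delicate, normalization of the $2\times 2$ Hilbert--Burch-type data of the codimension-$2$ structure plus the single extra quadric.
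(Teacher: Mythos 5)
First, a point of comparison that matters: the paper does not prove this statement at all. It is quoted from Mantero--Mastroeni \cite{mantero2021betti} and used as a black box to pin down the admissible quadric generators for types 2.5 and 2.6, so there is no in-paper argument to measure your attempt against; what you are really offering is a reconstruction of the proof in the cited reference. Your skeleton is broadly of the right shape --- the associativity formula to enumerate the top-dimensional structure, case-by-case normalisation of generators, Koszulness to eliminate what remains --- and several individual observations are correct: the non-Cohen--Macaulayness via an Artinian reduction of length at least $3$, the identification $J=(x_0,x_1)\cap(x_2,x_3)$ when the two planes span a $4$-dimensional space of linear forms (using that $J_2$ is $4$-dimensional and the saturated ideal is quadric-generated), and the verification that the listed ideals are Koszul via Fr\"oberg/monomial degenerations.

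However, there are genuine gaps. (1) Your analysis only controls the unmixed part of $J$, while the theorem classifies the ideal generated by the four quadrics themselves; the entire difficulty is to bound how $J$ may differ from its unmixed part by embedded or lower-dimensional structure while remaining Koszul. Your tools (i)--(iii) are necessary conditions and do not visibly exclude the intermediate configuration the theorem forbids, namely $(a_1x_0,a_2x_0,a_3x_0,q)$ with $q$ a zerodivisor modulo $(a_1x_0,a_2x_0,a_3x_0)$ but $q\notin(a_1,a_2,a_3)$. (2) Your proposed remedy for this ``hardest part'' is circular: invoking ``the classification of Betti numbers of Koszul algebras defined by four quadrics'' is invoking precisely the Mantero--Mastroeni results of which this theorem is a component. (3) The trichotomy does not map onto the listed forms as cleanly as you assert: for instance $(x_0^2,x_0x_1,x_1^2,x_0x_2)$ is a quadratic monomial (hence Koszul) ideal with $\mathrm{ht}=2=e(R)$, supported on a single plane, so it sits in your case (c), yet it is not coordinate-equivalent to $(x_0^2,x_0x_1,x_1^2,x_0x_2+x_1x_3)$ (the fourth generator has tensor rank $1$ rather than $2$ modulo $(x_0,x_1)^2$); it lands in form (II) with $a_1=x_0$, $a_2=x_1$, $a_3=x_2$, $q=x_1^2$. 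So your claim that the ribbon case yields only the special ideal in (I) is false as stated, and the Hilbert--Burch step for (c) is asserted rather than carried out. As it stands the proposal is a plausible strategy outline, not a proof; for the purposes of this paper the correct move is the one actually taken, namely citing \cite{mantero2021betti}.
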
 
Case (III) does not correspond to a Betti table with four linear first syzygies and one linear second syzygy, so we are in case (I) or case (II). In case (I) the latter option is not reduced so we are restricted to the case $J=(x_0,x_2) \cap (x_2,x_3)$.

\subsection{Type 2.7}\label{2.7}
The following construction is an example of a codimension 4 Gorenstein curve with Betti table as in type 2.7. Any such curve has degree 15 and arithmetic genus 16. \\
\begin{table}[h!]
    \[\begin{array}{c|l}
        & 0 \hspace{0.65cm} 1\hspace{0.65cm} 2 \hspace{0.68cm} 3 \hspace{0.65cm} 4 \\ \hline
       0 &1 \hspace{0.5cm} - \hspace{0.45cm} - \hspace{0.45cm} - \hspace{0.45cm} - \\
      1& - \hspace{0.55cm} 5 \hspace{0.65cm} 5 \hspace{0.65cm} 1 \hspace{0.6cm}-\\
      2&- \hspace{0.55cm} 1 \hspace{0.65cm} 2 \hspace{0.65cm} 1 \hspace{0.6cm} -\\
      3&- \hspace{0.55cm} 1 \hspace{0.65cm} 5 \hspace{0.65cm} 5 \hspace{0.6cm} - \\
      4& - \hspace{0.4cm} - \hspace{0.44cm} - \hspace{0.42cm} - \hspace{0.53cm} 1
    \end{array}
\]
\caption*{Type 2.7~\cite{schenck2020calabiyau}}
\label{tab:table6}
\end{table}

Consider the quadrics 
\begin{equation}
    Q_1=x_0x_5, \quad Q_2=x_1x_5, \quad Q_3=x_2x_5, \quad Q_4, \quad Q_5,
\end{equation}
with $Q_4,Q_5 \in (x_0,x_1,x_2)$. Then $J=(Q_1,Q_2,Q_3,Q_4,Q_5)$ has five linear syzygies as required. Thus, $C$ breaks up into two curves, namely $C_1 \subset \mathbb{P}^4_{\left<x_0\dots x_4\right>}$ and $C_2 \subset \mathbb{P}^3_{\left<x_3:x_4:x_5\right>}$. Again, $C_2$ must be defined by a nonsingular quartic with degree 4 and genus 3, and the double locus of $C$ is 4 points. We obtain from (\ref{points1}) that $C_1$ is degree 11 and genus 10. Let $l_1$ be the line $x_0=x_1=x_2=0$ in $\mathbb{P}^4$. \\

Once more $\Gamma=C_1 \cup l_1$ is Gorenstein since $K_{\Gamma}|_{l_1}$ is halfcanonical. Since $\Gamma$ is degree 12 and we need one more cubic relation we define $\Gamma$ as the complete intersection of two quadrics, $Q_4$ and $Q_5$, and a cubic, $F$, all in $(x_0,x_1,x_2)$. It follows that $C_1$ and $l_1$ are the two irreducible components of $\Gamma$, and $C_1$ is nonsingular. The curves $C_1$ and $l_1$ meet in 4 points defining a quartic and mapping this quartic into $\mathbb{P}^2$, adding arbitrary terms in $(x_5)$, defines a nonsingular quartic curve $C_2$. The union of these two curves is Gorenstein codimension 4, with Betti table as prescribed. \\

Moreover, we can construct a deformation to CGKK 2~\cite{coughlan2016arithmetically}, which is in the same Hilbert scheme as type 2.7 and type 2.8, courtesy of Jan Stevens.
\begin{table}[h!]
    \[\begin{array}{c|l}
        & 0 \hspace{0.65cm} 1\hspace{0.65cm} 2 \hspace{0.68cm} 3 \hspace{0.65cm} 4 \\ \hline
       0 &1 \hspace{0.5cm} - \hspace{0.45cm} - \hspace{0.45cm} - \hspace{0.45cm} - \\
      1& - \hspace{0.55cm} 5 \hspace{0.65cm} 5 \hspace{0.5cm} - \hspace{0.45cm}-\\
      2&- \hspace{0.55cm} 1 \hspace{0.5cm} - \hspace{0.5cm} 1 \hspace{0.6cm} -\\
      3&- \hspace{0.4cm} - \hspace{0.5cm} 5 \hspace{0.65cm} 5 \hspace{0.6cm} - \\
      4& - \hspace{0.4cm} - \hspace{0.44cm} - \hspace{0.42cm} - \hspace{0.53cm} 1
    \end{array}
\]
\caption*{CGKK 2~\cite{coughlan2016arithmetically}}
\label{tab:table10}
\end{table}

Consider the syzygy module of the five generating quadrics, 
\begin{equation}
    \begin{split}
    & Q_1=x_0x_5, \\
    & Q_2=x_1x_5, \\
    & Q_3=x_2x_5, \\
    & Q_4=a_1x_0+a_2x_1+a_3x_2, \\ 
    & Q_5=b_1x_0+b_2x_1+b_3x_2.
    \end{split}
\end{equation}
Then this is a $6 \times 5$ matrix

\begin{equation}
    M=\begin{pmatrix}
    0 & x_2 & -x_1 & 0 & 0  \\
    -x_2 & 0 & x_0 & 0 & 0  \\
    x_1 & -x_0 & 0 & 0 & 0  \\
    -b_1 & -b_2 & -b_3 & 0 & x_5  \\
    a_1 & a_2 & a_3 & -x_5 & 0 \\
    0 & 0 & 0 & -Q_5 & Q_4 \\
    \end{pmatrix}.
\end{equation}
Using deformation variable $t$, we construct a new matrix
\begin{equation}
    M_t=\begin{pmatrix}
    0 & x_2 & -x_1 & tb_1 & -ta_1  \\
    -x_2 & 0 & x_0 & tb_2 & -ta_2  \\
    x_1 & -x_0 & 0 & tb_3 & -ta_3  \\
    -b_1 & -b_2 & -b_3 & 0 & x_5  \\
    a_1 & a_2 & a_3 & -x_5 & 0 \\
    0 & 0 & 0 & -Q_5 & Q_4 \\
    \end{pmatrix}.
\end{equation}
Ignoring the bottom row of the matrix, and multiplying rows 4 and 5 by $t$ gives a skew-symmetric matrix. We may then take the $4 \times 4$ Pfaffians and cancel $t$ to obtain the five quadrics
\begin{equation}
    \begin{split}
        & Q_1 = x_0x_5 - t(a_2b_3-a_3b_2), \\
        & Q_2 = x_1x_5 - t(a_1b_3-a_3b_1), \\
        & Q_3 = x_2x_5 - t(a_1b_2-a_2b_1), \\
        & Q_4=a_1x_0+a_2x_1+a_3x_2, \\ 
        & Q_5=b_1x_0+b_2x_1+b_3x_2.
    \end{split}
\end{equation}
We also deform the cubic $F$. Suppose $F=c_1x_0+c_2x_1+c_3x_2$, with $c_1,c_2,c_3$ all of degree 2. As discussed earlier, the quartic in type $2.7$ is given by a quartic in $k[x_0,\dots,x_4]$ plus additional terms in $(x_5)$. In fact the first part is the quartic obtained as the determinant of the matrix \begin{equation}
    N=\begin{pmatrix}
    a_1 & a_2 & a_3 \\
    b_1 & b_2 & b_3 \\
    c_1 & c_2 & c_3 \\
    \end{pmatrix}.
\end{equation}
We can write the quartic as $q=\text{det}(N) + gx_5$ where $g$ is a degree three polynomial. We define our deformed cubic as $F_t=F+tg$. Consider the ideal $I_t=(Q_1, Q_2, Q_3, Q_4,Q_5,F_t,q)$. Then for $t=0$ this is clearly the defining ideal for our type 2.7 nodal curve. Otherwise, note that $tq$ is in the ideal $J_t$ generated by the first six relations. This follows since 
\begin{equation}
tq + c_1Q_1+c_2Q_2+c_3Q_3 -x_5F_t = 0.
\end{equation}
Note that $J_t$ is prime, which can be checked with computer algebra. Thus if $t$ is invertible then $I_t=J_t$ and is defined by the $4 \times 4$ Pfaffians of a $5 \times 5$ skew-symmetric matrix intersecting a cubic hypersurface. This deformation corresponds to Betti table CGKK 2~\cite{coughlan2016arithmetically}.

\subsection{Type 2.3}\label{2.3}
We now focus on a curve with degree 16 and genus 17 corresponding to Betti table type 2.3. \\
\begin{table}[h!]
    \[\begin{array}{c|l}
        & 0 \hspace{0.65cm} 1\hspace{0.65cm} 2 \hspace{0.68cm} 3 \hspace{0.65cm} 4 \\ \hline
       0 &1 \hspace{0.5cm} - \hspace{0.45cm} - \hspace{0.45cm} - \hspace{0.45cm} - \\
      1& - \hspace{0.55cm} 4 \hspace{0.65cm} 3 \hspace{0.52cm} - \hspace{0.45cm}-\\
      2&- \hspace{0.55cm} 3 \hspace{0.65cm} 6 \hspace{0.65cm} 3 \hspace{0.6cm} -\\
      3&- \hspace{0.4cm} - \hspace{0.5cm} 3 \hspace{0.65cm} 4 \hspace{0.6cm} - \\
      4& - \hspace{0.4cm} - \hspace{0.44cm} - \hspace{0.42cm} - \hspace{0.53cm} 1
    \end{array}
\]
\caption*{Type 2.3~\cite{schenck2020calabiyau}}
\label{tab:table8}
\end{table}

Consider the cubic scroll $\mathbb{F}=\mathbb{F}(1,2) \subset \mathbb{P}^4_{\left<x_0\dots x_4\right>}$, defined by equations 
\begin{equation}
    \rank\begin{pmatrix}
    x_0 & x_1 & x_3 \\
    x_1 & x_2 & x_4 \\
    \end{pmatrix} \leq 1.
\end{equation}

$\mathbb{F}(1,2)$ is given by the surface scroll $\mathbb{F}_1$ embedded into $\mathbb{P}^4$ by the linear system $2A+B$ where $A$ is the fibre of $\mathbb{F}_1 \rightarrow \mathbb{P}^1$ and $B$ is the negative section~\cite{Reid1996ChaptersSurfaces}.
The curve $C_2$ is given by $\mathbb{F} \cap X$ where $X$ is a general cubic hypersurface. This curve has degree 9 and genus 7. $C_1 \subset \mathbb{P}^3$ is residual to a conic in a (3,3) complete intersection, such that the double locus of $C$ is given by 6 points. It has degree 7 and genus 5. \\

To describe the construction in terms of explicit relations, consider $x_0x_2-x_1^2$, the third minor of the matrix defining $\mathbb{F}$. $C_1$ lies in $\mathbb{P}^3=V(x_3,x_4)$. Define the plane quadric $Q=V(x_0x_2-x_1^2,x_5) \subset \mathbb{P}^3$, and consider two general cubics $G_1, G_2$ containing $Q$. Such cubics have the form 
\begin{align*}
    G_1 = P_1(x_0x_2-x_1^2)+Q_1x_5, \\
    G_2 = P_2(x_0x_2-x_1^2)+Q_2x_5,
\end{align*}
with $P_1$, $P_2$ linear and $Q_1$, $Q_2$ quadrics. $C_1$ is residual to $Q$ in the $(3,3)$ complete intersection $(G_1,G_2)$. It is defined by one further cubic, given by $H=P_1Q_2-P_2Q_1$. Mapping this $H$ into $\mathbb{P}^4$, adding arbitrary terms in $(x_3,x_4)$, defines a cubic hypersurface $X$. We have $C_2 = \mathbb{F} \cap X$, and $C_1 \cup C_2$ is a Gorenstein codimension four curve in $\mathbb{P}^5$ corresponding to Betti table type 2.3.

\section{Further research}
Having populated the list of Betti tables with examples of nodal curves, a number of open questions remain. Firstly, can we construct more deformations between curves in the same Hilbert scheme, as in type 2.7? Email correspondence with Jan Stevens and Stephen Coughlan answers in the affirmative for types 2.6 and 2.8. There is also the question of whether this list is exhaustive, or if there are further possible curve constructions. The existence of topologically different constructions for type 2.6 suggests this could be the case for other types. In particular for higher degree curves, the picture may be more complicated. In some cases we have only been able to definitively state the quadrics if they define a Koszul algebra, so there may be a different set of quadric relations. We have restricted our search to nodal curves, and so it is possible there are further constructions with worse singularities. We also raise the idea of constructing surfaces and singular 3-folds corresponding to the type 2 Betti tables, or alternatively finite point sets.

\section{Appendix}
\subsection{Type 2.1}
\label{2.1}
We now consider how to construct a curve in $\mathbb{P}^5$ corresponding to Betti table type 2.1, which has degree 18 and genus 19.
\begin{table}[h!]
    \[\begin{array}{c|l}
        & 0 \hspace{0.65cm} 1\hspace{0.65cm} 2 \hspace{0.68cm} 3 \hspace{0.65cm} 4 \\ \hline
       0 &1 \hspace{0.45cm} - \hspace{0.49cm} - \hspace{0.49cm} - \hspace{0.4cm} - \\
      1& - \hspace{0.5cm} 2 \hspace{0.7cm} 1 \hspace{0.54cm} - \hspace{0.4cm}-\\
      2&- \hspace{0.5cm} 9 \hspace{0.6cm} 18 \hspace{0.57cm} 9 \hspace{0.57cm} -\\
      3&- \hspace{0.35cm} - \hspace{0.54cm} 1 \hspace{0.65cm} 2 \hspace{0.57cm} - \\
      4& - \hspace{0.35cm} - \hspace{0.45cm} - \hspace{0.45cm} - \hspace{0.5cm} 1
    \end{array}
\]
\caption*{Type 2.1 \cite{schenck2020calabiyau}}
\label{tab:table2}
\end{table}

We see there are two quadric relations, $Q_1$ and $Q_2$, with one linear syzygy, which may be given as $L_1Q_1-L_2Q_2=0$ for some linear forms $L_1,L_2$. Since we want $Q_1 \neq \lambda Q_2$ for any scalar $\lambda$, we have $L_1 \neq \lambda L_2$ and consequently $L_1$ divides $Q_2$, $L_2$ divides $Q_1$. It follows that $Q_1=L_1L_3$, $Q_2=L_2L_3$ for some linear form $L_3$. Thus without loss of generality we can set
\begin{equation}
Q_1 = x_0x_5, \quad Q_2=x_1x_5.
\end{equation}
It follows that $C$ is singular, and breaks up into $C_1 \subset \mathbb{P}^4_{\left<x_0\dots x_4\right>}$ of degree $d_1$ and genus $g_1$ and $C_2 \subset \mathbb{P}^3_{\left<x_2\dots x_4\right>}$ of degree $d_2$ and genus $g_2$. Supposing that $C$ is at worst a nodal curve, with $C_1$ and $C_2$ nonsingular, and supposing further that $C_2$ is an $(a,b)$ complete intersection we have that $d_2=ab$ and $g_2=\tfrac{1}{2}ab(a+b-4)+1$. It follows from (\ref{points2}) that 
\begin{equation}
    ab=\tfrac{1}{2}ab(a+b-4)+\tfrac{d}{2}.
\end{equation}
Since we have no relations of degree 4 or higher we expect $a,b$ to be at most 3. Notice also that since $C_2$ intersects $x_5=0$ in $ab$ points, $C_1$ and $C_2$ must intersect in at most $ab$ points. This excludes the cases $(1,2),(1,3),(2,2)$, and if $a=b=3$ then we obtain that $d=0$, contradicting our assumption that $C$ is a nodal curve consisting of two nonsingular curves intersecting transversally at a non-zero number of points. Thus we look at the case where $C_2$ is a $(2,3)$ complete intersection. \\ 

It follows that $g_2=4$ and $d_2=6$. We expect $C_1$ to be a curve with $d_1=18-6=12$ and consequently $g_1=10$. Let $Q_3 \in (x_2,x_3,x_4,x_5)$ be the quadric in the complete intersection. Consider the plane conic $q$ in $\mathbb{P}^4$ defined by $x_0=x_1=Q_3|_{x_5=0}=0$. Then $\omega_{q}=\mathcal{O}_{q}(-1)$ by the adjunction formula \cite[page 41]{eisenbud_harris_2016}, and $K_q=-A_q$ where $A_q$ is the hyperplane class. Let $\Gamma$ be the union of $C_1$ and $q$. Then as before $K_{\Gamma}|_q=K_{q}+D$ where $D$ is the divisor of the 6 double points on $q$. Thus $\mathcal{O}_q(K_{\Gamma})=\mathcal{O}_q(-1+3)=\mathcal{O}_q(2)$, and $\Gamma$ is halfcanonical and hence Gorenstein. \\

Since $\Gamma$ is degree 14 and Gorenstein codimension 3 we expect it to be defined by the $6 \times 6$ Pfaffians of a $7 \times 7$ skew-symmetric matrix. All elements of the matrix must be linear, since there are no quartic or higher degree relations. Thus we now consider what conditions must be satisfied for the Pfaffians of the matrix to lie in the ideal $J=(x_0,x_1,Q_3|_{x_5=0})$, but not in $(x_0,x_1)$. Consider the matrix 
\begin{equation}
    M = \begin{pmatrix}
    & a_{12} & a_{13} & a_{14} & a_{15} & a_{16} & a_{17}\\
    & & a_{23} & a_{24} & a_{25} & a_{26} & a_{27} \\
    & & & a_{34} & a_{35} & a_{36} & a_{37} \\
    & & & & a_{45} & a_{46} & a_{47} \\
    & & & & & a_{56} & a_{57} \\
    & & & & & & a_{67} \\
    \end{pmatrix}.
\end{equation}
If $Q_3$ is made up of a reducible part in $(x_2^2,x_2x_3,x_2x_4,x_3^2,x_3x_4,x_4^2)$ plus terms in $(x_5)$ we can construct an $M$ such that the Pfaffians lie in $J$. An open problem following on from this work is whether such an $M$ can be constructed to contain a more general quadric, with $C_1$ still a nonsingular and irreducible curve. If $Q_3$ is in the required form, assume without loss of generality that $Q_3|_{x_5=0}=x_2x_3$. Then the following constraints ensure the Pfaffians lie in $J$. First suppose that $a_{ij} \in (x_0,x_1)$ for $i,j,k \notin \{5,6,7\}$, i.e. that $M$ is a $\text{Tom}_{567}$. We add the further constraints that, except for $a_{56}$ which is general, $a_{ij} \in (x_0,x_1,x_2)$ for $j=5$ and $a_{ij} \in (x_0,x_1,x_3)$ for $j=6$. \\

The curve $C_1$ residual to $q$ in $\Gamma$ is nonsingular and irreducible, of degree 12 and genus 10. Moreover, the intersection of $C_1$ with the conic $q$ is 6 points given as the intersection of $Q_3|_{x_5=0}$ and a cubic $H$ in $\mathbb{P}^2_{\left<x_2\dots x_4\right>}$. We can map this cubic into $\mathbb{P}^3$, adding arbitrary terms in $(x_5)$ to define a new nonsingular cubic. The $(2,3)$ complete intersection $C_2$ is given by $(Q_3,H)$. The union of these two curves defines our nodal curve with resolution given by Betti table 2.1.

\subsection{Type 2.2}\label{2.2}
We now outline a construction of a nodal curve $C \subset \mathbb{P}^5$ with degree 17 and genus 18 corresponding to Betti table type 2.2. \\
\begin{table}[h!]
    \[\begin{array}{c|l}
        & 0 \hspace{0.65cm} 1\hspace{0.65cm} 2 \hspace{0.68cm} 3 \hspace{0.65cm} 4 \\ \hline
       0 &1 \hspace{0.5cm} - \hspace{0.45cm} - \hspace{0.45cm} - \hspace{0.45cm} - \\
      1& - \hspace{0.55cm} 3 \hspace{0.65cm} 1 \hspace{0.52cm} - \hspace{0.45cm}-\\
      2&- \hspace{0.55cm} 5 \hspace{0.6cm} 12 \hspace{0.5cm} 5 \hspace{0.6cm} -\\
      3&- \hspace{0.4cm} - \hspace{0.5cm} 1 \hspace{0.65cm} 3 \hspace{0.6cm} - \\
      4& - \hspace{0.4cm} - \hspace{0.44cm} - \hspace{0.42cm} - \hspace{0.53cm} 1
    \end{array}
\]
\caption*{Type 2.2~\cite{schenck2020calabiyau}}
\label{tab:table7}
\end{table}

The quadric relations in $\mathbb{P}^5_{\left<x_0\dots x_5\right>}$ are necessarily of the form
\begin{equation}
    Q_1 = x_0x_5, \quad Q_2= x_1x_5, \quad Q_3,
\end{equation}
with the single syzygy $x_1Q_1\equiv x_0Q_2$. Thus $C$ breaks up into $C_1 \subset \mathbb{P}^4_{\left<x_0\dots x_4\right>}$ and $C_2 \subset \mathbb{P}^3_{\left<x_2\dots x_5\right>}$. The curve $C_2$ is a $(2,3)$ complete intersection of degree 6 and genus 4. Moreover, the double locus consists of 6 points and we expect $C_1$ to be of genus 11 and degree 9. For $C_1 \cup C_2$ to be halfcanonical we require the 6 points to define a hyperplane section, and we again consider a plane conic $q$ in $\mathbb{P}^4_{\left<x_0\dots x_4\right>}$ in the plane of the double points, with $q$ defined by $x_0=x_1=Q_3=0$. Then we once more have that $\Gamma = C_1 \cup q$ is halfcanonical and hence Gorenstein. We thus define $\Gamma$ using the Pfaffians of a skew-symmetric matrix. The construction is as follows: we define a matrix 
\begin{equation}
    M =  \begin{pmatrix}
    & b_{12} & b_{13} & b_{14} & b_{15} \\
    & & a_{23} & a_{24} & a_{25}  \\
    & & & a_{34} & a_{35}  \\
    & & & & a_{45}  \\
    \end{pmatrix}
\end{equation}
with entries of the following degrees
\begin{equation}
    M =  \begin{pmatrix}
    & 2 & 2 & 2 & 2 \\
    & & 1 & 1 & 1  \\
    & & & 1 & 1  \\
    & & & & 1  \\
    \end{pmatrix},
\end{equation}
whose $4 \times 4$ Pfaffians are four cubics and a quadric, as in §\ref{2.6}. Unlike §\ref{2.6}, we wish for the four cubics to vanish on the plane $x_0=x_1=0$ in $\mathbb{P}^4$, and for the quadric not to lie in $(x_0,x_1)$ - it may be general in $\mathbb{P}^4$. This occurs if we set, for example, the $b_{ij}$ as quadrics in $(x_0,x_1)$, and the $a_{ij}$ linear and in all coordinates $x_0$ to $x_4$. Let $\Gamma$ be the curve defined by the $4 \times 4$ Pfaffians, and set $Q_3$ to be the quadric Pfaffian. Consider the residual curve $C_1$ to the conic defined by $x_0=x_1=Q_3=0$ in $\Gamma$. This is nonsingular and irreducible, and has degree 11 and genus 9 as required. It meets the conic in 6 points, which define a cubic $F_1$ such that the zero locus is given by $x_0=x_1=Q_3=F_1=0$. Mapping $Q_3$ and $F_1$ into $\mathbb{P}^3$ by adding arbitrary terms in $(x_5)$ we obtain the complete intersection $C_2$.

\subsection{Type 2.5}\label{2.5}
We now construct a curve in $\mathbb{P}^5_{\left<x_0\dots x_5\right>}$ corresponding to Schenck, Stillman and Yuan's type 2.5. The curve $C$ has degree 17, and due to the assumptions on Castelnuovo--Mumford regularity it is halfcanonical with arithmetic genus 18. \\
\renewcommand{\arraystretch}{1}
\begin{table}[h!]
    \[\begin{array}{c|l}
        & 0 \hspace{0.65cm} 1\hspace{0.65cm} 2 \hspace{0.68cm} 3 \hspace{0.65cm} 4 \\ \hline
       0 &1 \hspace{0.5cm} - \hspace{0.45cm} - \hspace{0.45cm} - \hspace{0.45cm} - \\
      1& - \hspace{0.55cm} 3 \hspace{0.65cm} 3 \hspace{0.65cm} 1 \hspace{0.6cm}-\\
      2&- \hspace{0.55cm} 7 \hspace{0.55cm} 14 \hspace{0.55cm} 7 \hspace{0.6cm} -\\
      3&- \hspace{0.55cm} 1 \hspace{0.65cm} 3 \hspace{0.65cm} 3 \hspace{0.6cm} - \\
      4& - \hspace{0.4cm} - \hspace{0.44cm} - \hspace{0.42cm} - \hspace{0.53cm} 1
    \end{array}
\]
\caption*{Type 2.5~\cite{schenck2020calabiyau}}
\label{tab:table4}
\end{table}

Let $J=(Q_1,Q_2,Q_3)$ be the ideal of the three quadrics, $S=k[x_0,\dots,x_n]$. In the case that $R=S/J$ is a Koszul algebra we may apply a theorem of Mantero--Mastroeni~\cite{mantero2021betti} to categorize the quadrics.
\begin{proposition}
If $R=S/J$ is a Koszul algebra then $J$ is given by \[Q_1 = x_0x_5, \quad Q_2 = x_1x_5, \quad Q_3 = x_2x_5.\]
\end{proposition}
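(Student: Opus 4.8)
The plan is to run the same height analysis as in \S\ref{2.6}, now for three quadrics. Since $R = S/J$ is Koszul and $J = (Q_1,Q_2,Q_3)$ is a nonzero ideal minimally generated by three quadrics (so $\mu(J)=3$), Krull's height bound gives $\text{ht}\,J \in \{1,2,3\}$, and I would dispatch the three cases using the structural results of Mantero--Mastroeni~\cite{mantero2021betti} and Mastroeni~\cite{mastroeni2018koszul} already invoked in \S\ref{2.6}.

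If $\text{ht}\,J = 3$, then $J$ is a complete intersection of three quadrics, resolved by the Koszul complex, so its first syzygy matrix has purely quadratic entries and $R$ has no linear first syzygies, contradicting the entry $\beta_{2,3} = 3$ of the Betti table. If $\text{ht}\,J = 2$, then $J$ is an almost complete intersection, since $\mu(J) = 3 = \text{ht}\,J + 1$, and a Koszul almost complete intersection has at most two linear syzygies by Mastroeni~\cite{mastroeni2018koszul} — again incompatible with $\beta_{2,3} = 3$. This forces $\text{ht}\,J = 1$, and the Mantero--Mastroeni classification of height-one Koszul quadratic ideals then lets us write $J = zI$ with $z$ a linear form and $I$ a complete intersection of linear forms; since $z$ is a nonzerodivisor, $\mu(I) = \mu(J) = 3$, so $I = (\ell_0,\ell_1,\ell_2)$ for linearly independent linear forms $\ell_0,\ell_1,\ell_2$. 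As a consistency check, $J \cong I(-1)$ is resolved by the Koszul complex on $\ell_0,\ell_1,\ell_2$ shifted by one, which reproduces exactly the linear strand $3,3,1$ of type 2.5.

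It then remains to show that $z$ is linearly independent of $\ell_0,\ell_1,\ell_2$. If it were not, then $z \in I$, hence $z^2 \in zI = J \subseteq I_C$; since $C$ is reduced, $I_C$ is radical, so $z \in I_C$ and $C$ would lie in the hyperplane $\{z = 0\}$, contradicting the nondegeneracy of $C$ recorded by $\beta_{1,1} = 0$ in the Betti table. Therefore $z,\ell_0,\ell_1,\ell_2$ are linearly independent, and a linear change of coordinates carrying $z$ to $x_5$ and $\ell_i$ to $x_i$ brings $J$ to the asserted form $(x_0x_5,\, x_1x_5,\, x_2x_5)$.

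I expect the main difficulty to be bookkeeping rather than any single calculation: one must check that the height trichotomy is genuinely exhaustive, that the hypotheses of Mastroeni's syzygy bound (a Koszul almost complete intersection) and of the Mantero--Mastroeni height-one structure theorem hold verbatim here, and that the generator counts $\mu(J) = \mu(I) = 3$ are tracked correctly so that the correct complete intersection of linear forms appears. The only genuinely geometric ingredient is the nondegeneracy of $C$, which is what rules out the non-reduced alternative $z \in I$, exactly as the second option in case (I) was discarded in \S\ref{2.6}.
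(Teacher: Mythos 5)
Your argument is correct and follows essentially the same route as the paper: the height trichotomy $\operatorname{ht}J\in\{1,2,3\}$, eliminating heights $3$ and $2$ via the Koszul complex and Mastroeni's bound on linear syzygies of Koszul almost complete intersections, and then invoking the Mantero--Mastroeni height-one structure $J=zI$. Your additional check that $z$ is independent of the generators of $I$ (via reducedness and nondegeneracy of $C$) is a welcome justification of the ``without loss of generality'' step that the paper leaves implicit.
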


\begin{proof}
    If $\text{ht}J=2$ then $R$ is an almost complete intersection and consequently has at most two linear syzygies~\cite{mastroeni2018koszul}. If $\text{ht}J=3$ then $R$ is a complete intersection and there are no linear syzygies. If $\text{ht} J=1$ then it is given as $zI$ where $z$ is a linear form and $I$ is a complete intersection of linear forms~\cite{mantero2021betti}. This option has the correct number of syzygies. Thus, without loss of generality, let $z=x_5$, $I=(x_0,x_1,x_2)$. Then $J=zI=(x_0x_5,x_1x_5,x_2x_5).$
\end{proof}
It follows that any curve $C$ with such quadric relations breaks up into two curves, $C_1 \subset \mathbb{P}^4_{\left<x_0\dots x_4\right>}$ and $C_2 \subset \mathbb{P}^2_{\left<x_3:x_4:x_5\right>}$. Thus $C_2$ is defined by a single quadric, cubic or quartic in $\mathbb{P}^2$. \\

Recall that a nonsingular plane quadric has genus 0, and a cubic has genus 1. Assuming $C_2$ is nonsingular, it follows that $C_2$ cannot be defined by a quadric or cubic. Note that $C_1 \cap C_2 \subset C_1 \cap H$ where $H$ is the hyperplane given by $x_5=0$. $C_2$ intersects $H$, and consequently $C_1$, in a maximum of deg($C_2$)=$d_2$ points, so $d<2$ or $d<3$ respectively. Thus (\ref{points2}) does not hold in these cases. Further $C_1 \cup C_2$ is halfcanonical so $K_{C_2} + D = 2H$ for $H$ a hyperplane section. We also have for $C_2$ a plane curve of degree $d_2$ that $K_{C_2} = (-3+d_2)H$, and so $D = (5-d_2)H \leq H$ and the only solution is $d_2=4$. Since $C_2$ is defined by a nonsingular plane quartic it has degree 4 and genus 3, and by (\ref{points2}) the double locus of $C = C_1 \cup C_2$ contains 4 points. \\

Let $\Gamma$ be the curve $C_1 \cup l_1$ in $\mathbb{P}^4$, with $l_1$ defined by $x_0=x_1=x_2=0$. Again it follows that  $\Gamma$ is halfcanonical, hence Gorenstein. The curve $\Gamma$ is of degree 14 since $C_1$ must have degree 13 by (\ref{points1}). By Buchsbaum-Eisenbud~\cite{10.2307/2373926} if $\Gamma$ is Gorenstein codimension 3 then it is defined by Pfaffians. Since $\Gamma$ is degree 14, and we need seven cubics to define $C$, $\Gamma$ is defined by the $6 \times 6$ Pfaffians of a $7 \times 7$ skew-symmetric matrix. As $\Gamma$ contains the line $l_1$ it is necessary that every Pfaffian lies in the ideal $(x_0,x_1,x_2)$.

\begin{proposition}
    Let M be a $7 \times 7$ skew-symmetric matrix. Let $I$ be the ideal of $6 \times 6$ Pfaffians of $M$. Then two possible formats such that $I \subset (x_0,x_1,x_2)$ are as follows:
    \begin{flalign*}
    (\textup{I}) \hspace{3cm} M = \begin{pmatrix}
    & a_{12} & a_{13} & a_{14} & a_{15} & b_{16} & b_{17}\\
    & & a_{23} & a_{24} & a_{25} & b_{26} & b_{27} \\
    & & & a_{34} & a_{35} & b_{36} & b_{37} \\
    & & & & a_{45} & b_{46} & b_{47} \\
    & & & & & b_{56} & b_{57} \\
    & & & & & & b_{67} \\ 
    \end{pmatrix} \\ \\
\end{flalign*}
\begin{flalign*}
    (\textup{II}) \hspace{3cm} M = \begin{pmatrix}
    & b_{12} & b_{13} & b_{14} & b_{15} & a_{16} & a_{17}\\
    & & b_{23} & b_{24} & b_{25} & a_{26} & a_{27} \\
    & & & b_{34} & b_{35} & a_{36} & a_{37} \\
    & & & & b_{45} & a_{46} & a_{47} \\
    & & & & & a_{56} & a_{57} \\
    & & & & & & a_{67} \\ 
    \end{pmatrix} \\
\end{flalign*}
where the $a_{ij}$ represent linear elements in $(x_0,x_1,x_2)$ and the $b_{ij}$ represent linear elements in all the coordinates on $\mathbb{P}^4$.
\end{proposition}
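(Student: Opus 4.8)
The plan is to check directly that, under either pattern, each of the seven $6\times 6$ sub-Pfaffians of $M$ lies in $(x_0,x_1,x_2)$. Recall that the generator of $I$ associated to an index $i$ is, up to sign, the Pfaffian of the $6\times 6$ skew-symmetric matrix obtained from $M$ by deleting its $i$th row and column, and that this Pfaffian is a sum of $15$ monomials, one for each partition of the surviving index set $\{1,\dots,7\}\setminus\{i\}$ into three unordered pairs, the monomial being the product of the three corresponding entries $m_{kl}$. Hence it suffices to show that every such partition into pairs uses at least one entry lying in $(x_0,x_1,x_2)$; that entry then divides the whole monomial.

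For format (I), which is the $\mathrm{Tom}_{67}$ shape (so $m_{kl}\in(x_0,x_1,x_2)$ exactly when $k,l\notin\{6,7\}$), I would split on the deleted index $i$. If $i\in\{6,7\}$, the surviving set consists of the other element of $\{6,7\}$ together with all of $\{1,\dots,5\}$; in any pairing that remaining element of $\{6,7\}$ is matched to some index of $\{1,\dots,5\}$, and the other four indices of $\{1,\dots,5\}$ are split into two pairs, each giving an entry in $(x_0,x_1,x_2)$. If instead $i\in\{1,\dots,5\}$, the surviving set is $\{6,7\}$ together with four indices of $\{1,\dots,5\}$; since $6$ and $7$ together meet at most two of the three pairs, some pair consists of two indices from $\{1,\dots,5\}$, giving an entry in $(x_0,x_1,x_2)$. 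Either way every monomial, and hence the sub-Pfaffian, lies in $(x_0,x_1,x_2)$.

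For format (II), the $\mathrm{Jer}_{67}$ shape (so $m_{kl}\in(x_0,x_1,x_2)$ exactly when $k\in\{6,7\}$ or $l\in\{6,7\}$), the argument is shorter. If $i\in\{1,\dots,5\}$ the surviving set still contains both $6$ and $7$, so every pairing contains a pair meeting $\{6,7\}$, whose entry lies in $(x_0,x_1,x_2)$. If $i\in\{6,7\}$ the surviving set still contains the other of the two, and in any pairing the pair containing it gives an entry in $(x_0,x_1,x_2)$. So once more each of the seven Pfaffians lies in $(x_0,x_1,x_2)$, and therefore $I\subset(x_0,x_1,x_2)$.

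There is no real obstacle here: the statement is a bookkeeping fact about the combinatorics of Pfaffian expansions, and the only care required is the case split on the deleted index together with the elementary observation used for format (I), namely that any pairing of $\{6,7,j_1,j_2,j_3,j_4\}$ with $j_1,\dots,j_4\in\{1,\dots,5\}$ leaves at least one pair entirely inside $\{1,\dots,5\}$. As in \S\ref{2.6} one could alternatively write out the seven Pfaffians explicitly and inspect their monomials term by term; the pairing argument is just the clean way of seeing why it is precisely the $\mathrm{Tom}$ and $\mathrm{Jer}$ patterns that force $I\subset(x_0,x_1,x_2)$.
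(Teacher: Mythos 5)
Your proof is correct and follows essentially the same route as the paper's: expand each $6\times 6$ sub-Pfaffian as a sum of monomials $m_{kl}m_{pq}m_{rs}$ indexed by pairings of the six surviving indices, and observe that for format (I) at most two pairs can meet $\{6,7\}$ so some entry has both indices in $\{1,\dots,5\}$, while for format (II) some pair always meets $\{6,7\}$. Your explicit case split on the deleted index just spells out what the paper states more tersely, so there is nothing to add.
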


\begin{proof}
    Consider the skew-symmetric matrix
    \begin{equation}
    M = \begin{pmatrix}
    & a_{12} & a_{13} & a_{14} & a_{15} & a_{16} & a_{17}\\
    & & a_{23} & a_{24} & a_{25} & a_{26} & a_{27} \\
    & & & a_{34} & a_{35} & a_{36} & a_{37} \\
    & & & & a_{45} & a_{46} & a_{47} \\
    & & & & & a_{56} & a_{57} \\
    & & & & & & a_{67} \\
    \end{pmatrix}.
\end{equation}
    We can explicitly state the Pfaffians of $M$, as in \cite{ishikawa2000minor}. Consider the $2n \times 2n$ submatrix defined by deleting the $i$th row and $i$th column of $M$. We define its Pfaffian using the symmetric group $G_i=S_{2n}$ on the set $\{1,\dots,\widehat{i},\dots,2n+1\}$. Let

\[
  \mathfrak{S_i} = \left\{  \sigma=(\sigma_1\dots \sigma_{2n}) \in G_i\ \quad \middle\vert \begin{array}{l}
    \quad \sigma_{2j-1}<\sigma_{2j} \qquad  1 \leq j \leq n \\
    \quad \sigma_{2j-1} < \sigma_{2j+1} \quad  1 \leq j \leq n-1
  \end{array}\right\}.
\]
    Recall that we can define the $2n \times 2n$ Pfaffian of the submatrix as 
    \begin{equation}
    \text{Pf}_i = \sum_{\sigma \in \mathfrak{S}_i}\text{sgn}(\sigma)a_{\sigma_1 \sigma_2}\cdots a_{\sigma_{2n-1}\sigma_{2n}}
    \end{equation}
    where $\text{sgn}(\sigma)=(-1)^{\ell (\sigma)}$, with $\ell (\sigma)$ the number of inversions.
    It follows that if all the $a_{\sigma_1 \sigma_2}\dots a_{\sigma_{2n-1} \sigma_{2n}}$ are contained in an ideal, then the Pfaffian must be contained in the ideal. We are working with $7 \times 7$ matrices so $n=3$ here and consequently each Pfaffian is a sum of elements of the form $a_{ij}a_{kl}a_{mn}$. It is clear that since any $a_{ij}a_{kl}a_{mn}$ contains three elements from different columns, (I) is a possible solution. Moreover, since any $a_{ij}a_{kl}a_{mn}$ must contain either a 6 or 7 in its indices, (II) also ensures $I \subset (x_0,x_1,x_2)$.
\end{proof}

More generally, we see that for any $(i,j) \in \{1,\dots,7\}$ with $i \neq j$ we can define two possible matrices such that $I \subset (x_0,x_1,x_2)$. Case (II) is the case where $a_{kl} \in (x_0,x_1,x_2)$ for $k \in \{i,j\}$ or $l \in \{i,j\}$ and other elements are general linear elements in the coordinates on $\mathbb{P}^4$. Case (I) is the case where $a_{kl} \in (x_0,x_1,x_2)$ for $k,l \notin \{i,j\}$ and other elements are general. In the language of Tom and Jerry, case (II) corresponds to $\text{Jer}_{67}$ and case (I) corresponds to $\text{Tom}_{67}$.\\

Defining $\Gamma$ using a matrix of the above form gives a curve with two irreducible nonsingular components: $C_1$ of degree 13 and genus 12 as expected, and $l_1$, the line defined by $(x_0,x_1,x_2)$ in $\mathbb{P}^4$. Moreover, the intersection of $C_1$ and $l_1$ is 4 points which define a quartic $q_4$ in $(x_3,x_4)$. This quartic can be mapped into $\mathbb{P}^2$ by adding arbitrary terms in $(x_5)$ to obtain the nonsingular quartic which defines $C_2$. The union of these two curves is a Gorenstein codimension 4 variety in $\mathbb{P}^5$ corresponding to Betti table 2.5.

\subsection{Type 2.8}\label{2.8}
We now consider type 2.8.
\begin{table}[h!]
    \[\begin{array}{c|l}
        & 0 \hspace{0.65cm} 1\hspace{0.65cm} 2 \hspace{0.68cm} 3 \hspace{0.65cm} 4 \\ \hline
       0 &1 \hspace{0.5cm} - \hspace{0.45cm} - \hspace{0.45cm} - \hspace{0.45cm} - \\
      1& - \hspace{0.55cm} 5 \hspace{0.65cm} 6 \hspace{0.65cm} 2 \hspace{0.6cm}-\\
      2&- \hspace{0.55cm} 2 \hspace{0.65cm} 4 \hspace{0.65cm} 2 \hspace{0.6cm} -\\
      3&- \hspace{0.55cm} 2 \hspace{0.65cm} 6 \hspace{0.65cm} 5 \hspace{0.6cm} - \\
      4& - \hspace{0.4cm} - \hspace{0.44cm} - \hspace{0.42cm} - \hspace{0.53cm} 1
    \end{array}
\]
\caption*{Type 2.8~\cite{schenck2020calabiyau}}
\label{tab:table3}
\end{table}
An example construction of a Gorenstein curve $C$ in $\mathbb{P}^5$ with such a minimal free resolution of its coordinate ring is as follows. Firstly, note that this variety is of degree 15, which follows from the Hilbert function \cite{schenck2020calabiyau}. Secondly the constraints of regularity 4 mean our curve will again be halfcanonical, of arithmetic genus 16. This ``big ears'' construction works in the following manner. Let $x_0,\dots,x_5$ be coordinates on $\mathbb{P}^5$. The quadrics
\begin{equation}
    Q_1=x_0x_4, \quad Q_2=x_1x_4, \quad Q_3=x_2x_5, \quad Q_4=x_3x_5, \quad Q_5=x_4x_5
\end{equation}
have six linear first syzygies and two linear second syzygies, thus satisfying the second row of the Betti table. It follows that $C=C_0 \cup C_1 \cup C_2$, with $C_0 \subset \mathbb{P}^3_{\left<x_0\dots x_3\right>}$, $C_1 \subset \mathbb{P}^2_{\left<x_0:x_1:x_5\right>}$, $C_2 \subset \mathbb{P}^2_{\left<x_2:x_3:x_4\right>}$. Each copy of $\mathbb{P}^2$ intersects the copy of $\mathbb{P}^3$ in a line, hence the term ``big ears'' to refer to the curves embedded into each $\mathbb{P}^2$. $C_0$ is a degree 7 genus 4 curve residual to a (3,3) complete intersection in $\mathbb{P}^3_{\left<x_0\dots x_3\right>}$ containing both lines, $l_1 \colon x_0=x_1=0$ and $l_2 \colon x_2=x_3=0$. Each cubic is thus in the ideal $J=(x_0x_2,x_1x_2,x_0x_3,x_1x_3)$. Let $I=(F_1,F_2)$ be the ideal defining the (3,3) complete intersection in $\mathbb{P}^3$, with 
\begin{equation}
\begin{split}
   & F_1=l_{13}x_1x_3+l_{23}x_2x_3+l_{14}x_1x_4+l_{24}x_2x_4, \\
   & F_2=m_{13}x_1x_3+m_{23}x_2x_3+m_{14}x_1x_4+m_{24}x_2x_4,
\end{split}
\end{equation}
where the $l_{ij},m_{ij}$ are linear forms in $k[x_0,\dots,x_3]$. The ideal defining the residual curve $C_0$ contains a further two quartics. These quartics may be calculated directly from the $2 \times 2$ minors of the matrix
\begin{equation}
M =
    \begin{pmatrix}
    l_{13} & l_{23} & l_{14} & l_{24} \\
    m_{13} & m_{23} & m_{14} & m_{24} \\
    \end{pmatrix}.
\end{equation}
One quartic, $H_1$ is double on the line $l_1$, and intersects $l_2$ transversally in 4 points, and vice versa for the second quartic $H_2$. Thus mapping $H_1$ into $\mathbb{P}^2_{\left<x_0:x_1:x_5\right>}$ by adding arbitrary terms in $(x_5)$ defines a nonsingular quartic curve $C_1$, and similarly mapping $H_2$ into $\mathbb{P}^2_{\left<x_2\dots x_4\right>}$ by adding arbitrary terms in $(x_4)$ defines a nonsingular quartic curve $C_2$. The union of these three curves is a codimension 4 Gorenstein curve in $\mathbb{P}^5$ whose coordinate ring has free resolution as in Betti table 2.8.

\bibliographystyle{plain}
\bibliography{references}

\end{document}